\begin{document}

\title{On Poincar\'e Duality for Orbifolds}
\author{Dmytro Yeroshkin}
\address{Syracuse University, Department of Mathematics, 215 Carnegie Building, Syracuse, NY 13244, USA}
\email{dyeroshk@syr.edu}

\begin{abstract}
In this paper we address the relation between the orbifold fundamental group and the topology of the underlying space. In particular, under the assumption that $\pi_1^{orb}(\calO)=\pi_1(|\calO|)$, we prove Poincar\'e Duality for orbifolds of dimension 4 and 5.
\end{abstract}

\maketitle
\section{Introduction}

When studying new manifolds, one of the first properties one considers is the cohomology ring. In particular, one uses the established tools that restrict the behavior of the cohomology ring to simplify this task. One of the best known tools for this is Poincar\'e Duality.

First introduced by Satake as V-manifolds (\cite{SVM}), and later studied in depth by Thurston (\cite{TGT3M}), orbifolds have gained prominence in recent years. See for example \cite{GWZCoho}, \cite{GVZPosC} and \cite{DP2} where orbifolds are used in the construction of the newest example of a manifold with positive sectional curvature. However, many of the tools invented for manifolds do not carry over fully. In particular, while Satake (\cite{SVM}) showed that orbifolds satisfy $\bbR$-valued Poincar\'e Duality, it is easy to come up with examples where $\bbZ$-valued Poincar\'e Duality fails. The focus of this paper is to study the properties of $\bbZ$-valued cohomology for orbifolds with particular focus on dimension 4 and 5.

The choice of dimension is dictated by the fact that in dimension 3 and below all orientable orbifolds are topological manifolds, and starting in dimension 6 the number of obstructions to Poincar\'e Duality increases (see Remark \ref{Rmk:HighDim}). The theorem below provides the foundation for using $\pi_1^{orb}$ to bound the failure of Poincar\'e Duality.

\begin{mthm}\label{Mthm:Pi1Orb}
Let $\calO^n$ be a compact orientable orbifold without boundary, then, the following diagram commutes:

\begin{center}
\begin{tikzpicture}
\node (OP1) at (0,2) {$\pi_1^{orb}(\calO)$};
\node (Coh) at (3,2) {$H^{n-1}(|\calO|;\bbZ)$};
\node (Pi1) at (0,0) {$\pi_1(|\calO|)$};
\node (Hom) at (3,0) {$H_1(|\calO|;\bbZ)$};

\draw[->>] (OP1) -- (Pi1) node [midway, left] {$\phi$};
\draw[->>] (OP1) -- (Coh) node [midway, above] {$\xi$};
\draw[->>] (Pi1) -- (Hom) node [midway, above] {$h_1$};
\draw[->>] (Coh) -- (Hom) node [midway, left] {$\psi$};
\end{tikzpicture}
\end{center}

where all the maps are surjective, $\phi$ is the canonical map, and $h_1$ is the Hurewicz homomorphism.

Furthermore, if $\phi$ is an isomorphism, so is $\psi$.
\end{mthm}

One interesting aspect of this result is that it provides the first known topological implication of the orbifold fundamental group other than the classical surjection $\pi_1^{orb}(\calO)\to\pi_1(|\calO|)$. The proof of this theorem relies on many classical results for cohomology as well as some understanding of non-manifold points, that is points in $\calO$ whose neighborhoods are not topological disks. As we will show, $H^{n-1},H_1$ form the only obstruction to integer-valued Poincar\'e Duality in dimensions 4 and 5, so we get a corollary:

\begin{mcor}\label{Mthm:PD45}
Let $\calO^n$ be a compact orientable orbifold without boundary, with $n=4$ or $n=5$. Then, if the natural map $\pi_1^{orb}(\calO)\to\pi_1(|\calO|)$ is an isomorphism, $|\calO|$ satisfies integer-valued Poincar\'e Duality. That is, $H^k(|\calO|;\bbZ)=H_{n-k}(|\calO|;\bbZ)$
\end{mcor}

\section{Preliminaries}

Recall that an $n$-dimensional orbifold $\calO^n$ is a space modeled locally on $\bbR^n/\Gamma$ with $\Gamma\subset O(n)$ finite. Given a point $p\in\calO$, the orbifold group at $p$, which we'll denote as $\Gamma_p$ is the subgroup of the group $\Gamma$ in the local chart $\bbR^n/\Gamma$, that fixes a lift of $p$ to $\bbR^n$. Note that different choices of a lift of $p$ result in $\Gamma_p$ being conjugated, and as such, we will think of $\Gamma_p$ up to conjugacy.

In parts of this paper, it will be useful to think of an orbifold $\calO^n$ as a disjoint collection of connected strata. Each stratum is a connected component of points with the same (up to conjugacy) orbifold group. One stratum deserves special mention, $\calO^{reg}$ is the stratum of points with trivial orbifold groups, and points in $\calO^{reg}$ are called regular. Furthermore $\calO^{reg}$ is an open dense subset of $\calO$. Recall that an orbifold $\calO$ is said to be orientable if $\calO^{reg}$ is orientable, and each orbifold group $\Gamma_p$ preserves orientation, that is $\Gamma_p\subset SO(n)$ for each $p\in\calO$. A choice of an orientation on $\calO$ is a choice of an orientation on $\calO^{reg}$.

We call an orbifold $\calU$ a cover of $\calO$ if $\calO = \calU/\Gamma$, where $\Gamma$ acts discretely, and the action of $\Gamma$ preserves the orbifold structure. We recall the definition of $\pi_1^{orb}$, the orbifold fundamental group. If $\calO=\calU/\Gamma$, with $\Gamma$ discrete and $\calU$ admitting no covers, then $\pi_1^{orb}(\calO)=\Gamma$.

\begin{rmk}
There is an alternative definition of an orbifold fundamental group, which is analogous to the path homotopy definition of the topological fundamental group. For precise definition and proof of the equivalence see \cite{SG3M}.
\end{rmk}

A well known result in orbifolds is that there exists a natural surjective map $\phi:\pi_1^{orb}(\calO)\twoheadrightarrow \pi_1(|\calO|)$.

In \cite{SVM} Satake proved $\bbR$-valued Poincar\'e duality for orbifolds:

\begin{prop}
Let $\calO^n$ be a compact orientable orbifold. Then, $H^k(|\calO|;\bbR)=H_{n-k}(|\calO|;\bbR)$.
\end{prop}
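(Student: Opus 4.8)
The plan is to prove the statement exactly as Poincar\'e Duality is proved for manifolds: exhibit $|\calO|$ as a compact orientable $\bbR$-homology manifold, produce a fundamental class $[|\calO|]\in H_n(|\calO|;\bbR)$, and conclude that cap product with $[|\calO|]$ is the desired isomorphism $H^k(|\calO|;\bbR)\cong H_{n-k}(|\calO|;\bbR)$. The whole point of working over $\bbR$ (a field of characteristic zero) is that finite-group quotients become cohomologically invisible via the transfer homomorphism, so the singular strata contribute nothing to the local homology.

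First I would verify the local homology condition at an arbitrary point $p\in|\calO|$. A neighborhood of $p$ is homeomorphic to $\bbR^n/\Gamma_p$, i.e.\ to the open cone on $S^{n-1}/\Gamma_p$, where $\Gamma_p\subset SO(n)$ is the (finite, orientation-preserving) isotropy group. Since the cone is contractible, the long exact sequence of the pair gives $H_k(|\calO|,|\calO|\setminus\{p\};\bbR)\cong \widetilde{H}_{k-1}(S^{n-1}/\Gamma_p;\bbR)$. The transfer homomorphism—averaging over $\Gamma_p$, which is invertible because $|\Gamma_p|$ is a unit in $\bbR$—identifies $H_*(S^{n-1}/\Gamma_p;\bbR)$ with the invariant subspace $H_*(S^{n-1};\bbR)^{\Gamma_p}$. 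As $\Gamma_p\subset SO(n)$ preserves orientation it acts trivially on $H_{n-1}(S^{n-1};\bbR)\cong\bbR$, so the invariants coincide with the full homology of $S^{n-1}$. Hence $H_k(|\calO|,|\calO|\setminus\{p\};\bbR)$ is $\bbR$ for $k=n$ and $0$ otherwise, which is precisely the defining property of an $\bbR$-homology manifold.

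Next I would globalize the orientation. The orientation sheaf of $|\calO|$ has stalk $H_n(|\calO|,|\calO|\setminus\{p\};\bbR)\cong\bbR$ at every point by the computation above, and on the dense open stratum $\calO^{reg}$ it is the constant sheaf determined by the chosen orientation. The hypothesis that each $\Gamma_p\subset SO(n)$ is orientation-preserving is exactly what guarantees these local orientations glue consistently across the singular strata, so the orientation sheaf is globally constant and $|\calO|$ is $\bbR$-orientable. Compactness then yields a fundamental class $[|\calO|]$ restricting to the preferred generator of each local homology group. With an orientable $\bbR$-homology manifold in hand, the classical Poincar\'e Duality theorem for homology manifolds (cap product with the fundamental class) delivers the isomorphism $H^k(|\calO|;\bbR)\xrightarrow{\ \cap[|\calO|]\ }H_{n-k}(|\calO|;\bbR)$, proving the proposition. (An alternative route is orbifold de Rham theory: invariant forms in local charts glue to a de Rham complex computing $H^*(|\calO|;\bbR)$, and integration against an averaged orbifold metric provides the nondegenerate Poincar\'e pairing by Hodge theory.)

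The main obstacle is not the duality theorem itself, which is a black box once the hypotheses are met, but the careful packaging of the orbifold charts into the sheaf-theoretic statements: verifying that the transfer argument applies uniformly on every stratum and, more delicately, that the orientation sheaf is genuinely \emph{globally} constant rather than merely locally trivial. Establishing this global triviality from the stratumwise orientation-preserving condition is where the real bookkeeping lies.
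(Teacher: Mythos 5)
Your proof is correct, and it takes a genuinely different route from the one the paper relies on: the paper does not prove this proposition at all, but cites Satake's V-manifold paper, where duality is obtained analytically (invariant differential forms in local charts, integration, Hodge theory --- the route you mention parenthetically at the end), and records only the key local input, namely $H^*(S^{n-1}/\Gamma;\bbR)=H^*(S^{n-1};\bbR)$ for finite $\Gamma\subset SO(n)$. Your argument isolates exactly that same local fact --- your transfer computation of $\widetilde{H}_{k-1}(S^{n-1}/\Gamma_p;\bbR)$, with orientation-preservation forcing the trivial $\Gamma_p$-action on top homology, \emph{is} that fact --- but then globalizes purely topologically: $|\calO|$ is a compact $\bbR$-oriented homology manifold, and cap product with the fundamental class finishes the job. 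What your route buys is independence from any smooth or Riemannian structure, and it makes transparent the theme of the rest of the paper: the only thing that fails over $\bbZ$ is this same local computation with integer coefficients, since $S^{n-1}/\Gamma$ can have torsion. What Satake's route buys is an explicit nondegenerate pairing by integration. Two points you flag as bookkeeping deserve one line each to be airtight: the transfer isomorphism $H_*(X/\Gamma;\bbR)\cong H_*(X;\bbR)^{\Gamma}$ for \emph{non-free} finite actions needs a mild hypothesis, satisfied here because the linear $\Gamma_p$-action on $S^{n-1}$ admits an equivariant triangulation; and for global constancy of the orientation sheaf the clean argument is that the singular set of an orientable orbifold has codimension at least $2$ (fixed-point sets of nontrivial elements of $SO(n)$ have codimension $\geq 2$), so $\pi_1(\calO^{reg})\to\pi_1(|\calO|)$ is surjective and a locally constant sheaf that is trivial over the dense connected open set $\calO^{reg}$ is globally trivial. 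Finally, duality for compact oriented homology manifolds over a field requires the space to be reasonable (an ANR or triangulable); underlying spaces of compact orbifolds are triangulable, so this is fine, but it should be said.
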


The core idea of the proof is the fact that $H^*(S^{n-1}/\Gamma;\bbR)=H^*(S^{n-1};\bbR)$ for any finite $\Gamma\subset SO(n)$. Since this fails for $\bbZ$-valued cohomology, we need a new approach.

\begin{lem}\label{Lem:Cover}
Let $X$ be a path connected space (resp. orbifold) with a path connected subspace (resp. suborbifold) $A$. Let $f:\tilde{X}\to X$ be a path connected cover (resp. orbifold cover) of $X$ such that $\tilde{A}=f^{-1}(A)$ is path connected and $i_*:\pi_1(\tilde{A})\to\pi_1(\tilde{X})$ (resp. $i_*:\pi_1^{orb}(\tilde{A})\to\pi_1^{orb}(\tilde{X})$) is an isomorphism. Then, $i_*:\pi_1(A)\to \pi_1(X)$ (resp. $i_*:\pi_1^{orb}(A)\to \pi_1^{orb}(\partial X)$) is an isomorphism.
\end{lem}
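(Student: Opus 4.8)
The plan is to reduce the statement to the Galois correspondence for (orbifold) covering spaces together with a short computation inside $\pi_1(X)$. First I would fix a basepoint $\tilde x_0\in\tilde A$ and set $x_0=f(\tilde x_0)\in A$; since $\tilde A$ (and hence $A,\tilde X,X$) is path connected, working with these compatible basepoints loses nothing. Write $G=\pi_1(X,x_0)$ and introduce the two subgroups that govern the situation: $H:=f_*\pi_1(\tilde X,\tilde x_0)\le G$, which classifies the connected cover $f$, and $K:=i_*\pi_1(A,x_0)\le G$, the image of the inclusion. The restriction $f|_{\tilde A}\colon\tilde A\to A$ is the pullback of $f$ along $i$, hence a connected orbifold cover of $A$ classified by the subgroup $i_*^{-1}(H)\le\pi_1(A,x_0)$. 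Under the covering isomorphisms $(f|_{\tilde A})_*\colon\pi_1(\tilde A)\xrightarrow{\sim}i_*^{-1}(H)$ and $f_*\colon\pi_1(\tilde X)\xrightarrow{\sim}H$, commutativity of $f_*\circ\tilde i_*=i_*\circ(f|_{\tilde A})_*$ identifies $\tilde i_*$ with the restriction $i_*|_{i_*^{-1}(H)}\colon i_*^{-1}(H)\to H$.

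Next I would translate the two hypotheses into group theory. For connectedness I use the monodromy action: the fiber $f^{-1}(x_0)$ is the $G$-set $G/H$, and since $A$ is path connected the path components of $\tilde A=f^{-1}(A)$ are exactly the $K$-orbits on $G/H$, i.e. the double cosets $K\backslash G/H$. Thus $\tilde A$ is path connected iff there is a single double coset, which, as it contains the identity, means $G=KH$. For the isomorphism hypothesis I use the identification above: $i_*|_{i_*^{-1}(H)}$ is surjective precisely when $i_*(i_*^{-1}(H))=H\cap K=H$, i.e. $H\le K$, and since $\ker i_*\subseteq i_*^{-1}(H)$ it is injective precisely when $i_*$ is injective. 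So $\tilde i_*$ being an isomorphism is equivalent to ``$H\le K$ and $i_*$ injective.''

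With these translations the conclusion is immediate. From $\tilde i_*$ an isomorphism I get $H\le K$, hence $KH=K$; combined with the connectedness output $G=KH$ this yields $K=G$, so $i_*$ is surjective. The isomorphism hypothesis also gives that $i_*$ is injective. Therefore $i_*\colon\pi_1(A)\to\pi_1(X)$ is an isomorphism, and the same chain of implications runs verbatim with $\pi_1^{orb}$ in place of $\pi_1$.

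I expect the only genuine obstacle to be justifying the covering-space dictionary in the orbifold setting: that $\pi_1^{orb}$ classifies connected orbifold covers by a Galois correspondence, that $f^{-1}(A)$ is an orbifold cover of the suborbifold $A$ realizing the pullback (so that it is classified by $i_*^{-1}(H)$), and that its components correspond to the monodromy orbits, i.e. the double cosets $K\backslash G/H$. For honest topological spaces these facts are standard, and for orbifolds I would invoke the covering theory and the path-homotopy description of $\pi_1^{orb}$ from \cite{SG3M}, after checking that ``suborbifold'' is used in a sense for which the monodromy and pullback constructions are valid. Once that dictionary is in hand, the remainder is the elementary subgroup bookkeeping above.
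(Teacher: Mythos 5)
Your argument is correct for the topological case and takes a genuinely different route from the paper's. The paper proves the lemma by hand with path and homotopy lifting: injectivity by lifting a null-homotopy (a loop in $A$ that bounds in $X$ lifts to a loop in $\tilde{A}$ that bounds in $\tilde{X}$, hence bounds in $\tilde{A}$ by hypothesis, then project down); surjectivity by lifting a loop $\gamma$ of $X$ to a path $\tilde{\gamma}$ with endpoints in $\tilde{A}$, closing it up by a path $\tilde{\phi}$ in $\tilde{A}$ (this is where path-connectedness of $\tilde{A}$ enters), replacing $\tilde{\gamma}\tilde{\phi}^{-1}$ by a homotopic loop $\tilde{\psi}$ in $\tilde{A}$ (surjectivity of $\tilde{i}_*$), and projecting to get $\gamma\simeq\psi\phi$ with $\psi\phi$ a loop in $A$. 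You instead translate everything through the Galois correspondence, with $H=f_*\pi_1(\tilde{X})$ and $K=i_*\pi_1(A)$: connectedness of $\tilde{A}$ becomes $G=KH$, the isomorphism hypothesis becomes $H\le K$ together with injectivity of $i_*$, and the conclusion $K=G$ drops out. Your bookkeeping is right --- the components of $f^{-1}(A)$ are the $K$-orbits on $G/H$, the restricted cover is classified by $i_*^{-1}(H)$, its image under $i_*$ is $H\cap K$ and its kernel is $\ker i_*$ --- and it buys something the paper's proof leaves implicit: a sharp accounting of which hypothesis yields which half of the conclusion (e.g.\ surjectivity of $i_*$ needs only surjectivity of $\tilde{i}_*$ plus connectedness of $\tilde{A}$). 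What the paper's route buys is robustness exactly where the lemma is applied, namely for orbifold covers near singular strata: the direct argument uses nothing but lifting of paths and homotopies, so it transfers to $\pi_1^{orb}$ under the path-homotopy definition with no further apparatus --- this is precisely the paper's closing remark that the argument ``is based purely on abstract homotopy of loops.'' Your route concentrates all the difficulty in the orbifold dictionary you flag at the end: the Galois correspondence for orbifold covers, the preimage of a suborbifold as the pullback cover classified by $i_*^{-1}(H)$, components as double cosets, and basepoint issues when $A$ meets the singular set. That is dischargeable via \cite{SG3M}, but it is genuine foundational overhead that the paper's more elementary argument avoids; if you write your version up, you should either carry out that verification or fall back on the lifting argument for the orbifold half.
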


\begin{proof}
The argument in here is presented using regular homotopies, but the same arguments work for the path homotopy definition of the orbifold fundamental group.

We show that the map $i_*:\pi_1(A)\to\pi_1(X)$ is both injective and surjective.

Suppose $\gamma:[0,1]\to A$ is a loop that is null-homotopic in $X$. Then, in particular, the lift of $\gamma$ to $\tilde{\gamma}:[0,1]\to\tilde{A}\subset\tilde{X}$ is also null-homotopic in $\tilde{X}$. Therefore, it must also be null-homotopic in $\tilde{A}$, which implies that $\gamma=f\circ\tilde{\gamma}$ is null-homotopic in $A$. Therefore, $i_*$ is injective.

Now, suppose that $\gamma:[0,1]\to X$ is a loop with $\gamma(0)=\gamma(1)\in A\subset X$. Then, consider some lift of $\gamma$, call it $\tilde{\gamma}:[0,1]\to X$ with $\tilde{\gamma}(0),\tilde{\gamma}(1)\in\tilde{A}$ (possibly distinct).

Let $\tilde{\phi}:[0,1]\to\tilde{A}$ be some path with $\tilde{\phi}(0)=\tilde{\gamma}(0)$ and $\tilde{\phi}(1)=\tilde{\gamma}(1)$. Then we obtain a loop $\tilde{\gamma}\tilde{\phi}^{-1}$ in $\tilde{X}$ based at $\tilde{\gamma}(0)$ by first traversing $\tilde{\gamma}$, and then $\tilde{\phi}$ in reverse.

Let $\tilde{\psi}:[0,1]\to\tilde{A}$ be some loop in $\tilde{A}$ that is homotopic to the loop $\tilde{\gamma}\tilde{\phi}^{-1}$ also based at $\tilde{\gamma}(0)$.

Let $\psi,\phi$ be the projections of $\tilde{\psi},\tilde{\phi}$ respectively onto loops in $A\subset X$. Then, we have $\psi\simeq \gamma\phi^{-1}$, so $\gamma\simeq \psi\phi$, but $\psi\phi$ is a loop in $A$, so we conclude that $i_*:\pi_1(A)\to\pi_1(X)$ is surjective.

Since the argument is based purely on abstract homotopy of loops, the same argument works for $\pi_1^{orb}$.
\end{proof}

We conclude this section with an observation that for an orientable orbifold $\calO$, if $p\in\calO$ is some point such that for $\epsilon>0$ small $|B_\epsilon(p)|$ is not homeomorphic to a disk, then $p$ lies in a stratum of codimension $\geq 4$. This is an immediate consequence of representation theory and the classification of finite subgroups of $SO(3)$. We call these points non-manifold points.

In particular, all orientable orbifolds of dimension $\leq 3$ have underlying spaces homeomorphic to manifolds. In dimension 4, compact orientable orbifolds have at most finitely many non-manifold points. And for compact orientable orbifolds of dimension 5, the set of non-manifold points forms a (possibly disconnected) graph.

\section{Proof of Theorem \ref{Mthm:Pi1Orb}}

In this section we show that there exist surjective maps $\xi:\pi_1^{orb}(\calO)\to H^{n-1}(|\calO|;\bbZ)$ and $\psi:H^{n-1}(|\calO|;\bbZ)\to H_1(|\calO|;\bbZ)$, as well as their compatibility with classical maps.

\begin{lem}\label{Lem:NonMfld}
Let $\calO$ be a compact orientable orbifold. Let $X^c\subset\calO$ be a connected component of the set of non-manifold points in $\calO$. There exists a small neighborhood $N^c$ of $X^c$ such that the map $i_*:\pi_1^{orb}(\partial N^c)\to \pi_1^{orb}(N^c)$ is an isomorphism.
\end{lem}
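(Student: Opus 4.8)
The plan is to identify $N^c$ with an orbifold tubular neighborhood of $X^c$ and reduce everything to the normal link. Let $m$ be the codimension of $X^c$; by the observation preceding the lemma, $m\geq 4$. For $\epsilon$ small I would take $N^c$ to be the $\epsilon$-tube around $X^c$, which the orbifold normal exponential map presents as the total space of a bundle over $X^c$ with fiber the cone $cL=D^m/\Gamma$ on the normal link $L=S^{m-1}/\Gamma$, where $\Gamma=\Gamma_p$ is the orbifold group (well defined up to conjugacy) along $X^c$. The boundary $\partial N^c$ is the corresponding $L$-bundle, and $\partial N^c\hookrightarrow N^c$ is fiberwise the inclusion of the link $L$ into its cone $cL$. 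Since $X^c$ and $L$ are connected, $N^c$ and $\partial N^c$ are connected, so the orbifold fundamental groups in the statement are based as usual.

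The heart of the argument is that the fiber inclusion $L\hookrightarrow cL$ is a $\pi_1^{orb}$-isomorphism. Because $X^c$ is exactly the stratum with group $\Gamma$, the normal representation $\Gamma\subset SO(m)$ fixes only $0$, so $\Gamma$ acts effectively on $S^{m-1}$. The quotient maps $S^{m-1}\to S^{m-1}/\Gamma=L$ and $D^m\to D^m/\Gamma=cL$ are orbifold covers by the simply connected $S^{m-1}$ (here $m-1\geq 3$) and the contractible $D^m$; hence they are the universal orbifold covers and $\pi_1^{orb}(L)=\Gamma=\pi_1^{orb}(cL)$. The inclusion lifts to $S^{m-1}\hookrightarrow D^m$, which is the identity on deck groups, so $i_*:\pi_1^{orb}(L)\to\pi_1^{orb}(cL)$ is the identity of $\Gamma$.

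It remains to promote this fiberwise isomorphism to the total spaces. I would first prove the case in which the base $X^c$ is simply connected: there the two bundles admit compatible presentations over a cell decomposition of $X^c$, on each cell $U$ the inclusion is the product $U\times L\hookrightarrow U\times cL$ (a $\pi_1^{orb}$-isomorphism by the previous paragraph, since $U$ is contractible), and a van Kampen argument assembling these local models shows $i_*:\pi_1^{orb}(\partial N^c)\to\pi_1^{orb}(N^c)$ is an isomorphism. For general $X^c$ I would then invoke Lemma \ref{Lem:Cover} with the cover $\tilde N^c\to N^c$ pulled back from the universal cover $\widetilde{X^c}\to X^c$ along the retraction $N^c\to X^c$: its restriction over $\partial N^c$ is the pulled-back $L$-bundle $\widetilde{\partial N^c}$, both $\tilde N^c$ and $\widetilde{\partial N^c}$ are connected, and the simply connected base case applies upstairs, so Lemma \ref{Lem:Cover} yields the isomorphism downstairs.

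The hard part will be exactly this promotion step. The clean statement is an orbifold fibration long exact sequence for $L\to\partial N^c\to X^c$ and $cL\to N^c\to X^c$, whose naturality together with the five lemma (using $\pi_1^{orb}(L)=\Gamma=\pi_1^{orb}(cL)$ and $\pi_0(L)=\pi_0(cL)=\ast$) would give the result at once; since we only have Lemma \ref{Lem:Cover} available, the analogous bookkeeping must be carried out by hand. The genuine difficulties are that the normal bundle may have nontrivial monodromy along $X^c$ and, more seriously, that $X^c$ is in general a stratified set rather than a smooth manifold (in dimension $5$ it is a graph, so the link changes at the vertices), so that the tubular neighborhood, the $L$-bundle structure, and the retraction $N^c\to X^c$ all have to be set up carefully along $X^c$ before the van Kampen gluing can be run.
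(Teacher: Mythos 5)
Your fiberwise computation ($\pi_1^{orb}(S^{m-1}/\Gamma)=\Gamma=\pi_1^{orb}(D^m/\Gamma)$ via the simply connected covers $S^{m-1}$ and $D^m$, valid because the codimension satisfies $m\geq 4$) is exactly the local input the paper uses, and your device of passing to a cover and descending via Lemma~\ref{Lem:Cover} parallels what the paper does to deal with monodromy. But there is a genuine gap: your entire promotion step presupposes that $N^c$ is a fiber bundle over $X^c$ with a single fiber $cL=D^m/\Gamma$, and that structure simply does not exist in the cases that carry the real content of the lemma. $X^c$ is a connected component of the set of non-manifold points, which is in general a union of strata of different dimensions with different orbifold groups; the codimension $m$, the group $\Gamma$, and hence the link $L$ all change from stratum to stratum. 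Already in dimension $5$, $X^c$ is a graph: along an edge the link is $S^3/\Gamma_{ij}$, but at a vertex the local model is $B^5/\Gamma_i$ for a different group $\Gamma_i$, and no neighborhood of the vertex fibers over $X^c$ with fiber a cone on a fixed link. So the objects your argument is built on --- the $L$-bundle, the $cL$-bundle, the retraction $N^c\to X^c$ used to pull back covers --- are undefined for a general $X^c$. You flag this at the end as something that ``has to be set up carefully,'' but it cannot be set up as a single bundle at all; what is needed is a decomposition adapted to the stratification, and supplying it is the actual proof.

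The paper's proof does exactly that. It decomposes $N^c$ into pieces matched to the strata: closed neighborhoods $V_i$ of the $0$-dimensional strata, neighborhoods $E_{ij}$ of the $1$-dimensional strata with the vertex neighborhoods removed, and, for strata of dimension $\geq 2$, cells cut out by auxiliary ``pseudo-strata.'' It then proves your fiberwise statement piece by piece (each piece and each pairwise intersection has the same $\pi_1^{orb}$ as its intersection with $\partial N^c$) and assembles the isomorphism by van Kampen. Note that the connectivity hypotheses needed for van Kampen are themselves an issue --- intersections of the pieces can be disconnected --- and the paper handles this by first passing to a tree cover of the graph $X^c$ (and to cell decompositions whose cells have connected boundaries in the higher-dimensional case) and then descending by Lemma~\ref{Lem:Cover}. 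Your proposal, restricted to the special case where $X^c$ is a single closed manifold stratum, amounts to essentially this argument; the stratified assembly you defer is precisely the part of the lemma that requires the work.
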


\begin{proof}
Let $p\in\calO$ be an isolated non-manifold point, then $\pi_1^{orb}(\bar{B_\epsilon(p)}) = \pi_1^{orb}(\partial B_\epsilon(p)) = \Gamma_p$, as follows from considering the local cover of the neighborhood and applying Lemma \ref{Lem:Cover}. This proves the Lemma for the case when $X^c$ is a single point. 

Next suppose that $X^c$ is a graph, that is a connected collection of 1 and 0 dimensional strata. In this case, instead of looking at $N^c$ and $\partial N^c$, we instead focus on specific covers. Namely, each connected graph is covered by a tree, so take such a cover of $X^c$ and the induced covers of $N^c$ and $\partial N^c$. Call these covers $N, \partial N$ respectively. Also, for convenience, we consider $N$ obtained by taking small $\delta_0>\delta_1>0$ and considering $\delta_1$-neighborhoods of the 1-dimensional strata and $\delta_0$-neighborhoods of the 0-dimensional strata, choose $\delta_1$ such that the intersection of the $\delta_1$-neighborhoods lies inside the $\delta_0$-neighborhoods. We now show that $\pi_1^{orb}(\partial N)\to\pi_1^{orb}(N)$ is an isomorphism.

We prove this using van Kampen. Consider the following subsets of $N$: Let $V_i = \bar{N_{\delta_0}(S_i^0)}$ be a closed $\delta_0$-neighborhood of a 0-dimensional stratum of $X^c$ (that is a ``vertex'' of the ``graph''). Let $E_{ij} = \bar{N_{\delta_1}(S_{ij}^1)} \setminus (N_{\delta_0}(S_i^0)\cup N_{\delta_0}(S_j^0))$ be a closed $\delta_1$-neighborhood of a 1-dimensional stratum of $X^c$ (``edge'' $S_{ij}^1$ with endpoints $S_i^0,S_j^0$) with the two endpoints cut out. We now consider pairs: $(V_i, \partial N \cap V_i)$ and $(E_{ij}, \partial N\cap E_{ij})$.

Topologically, $V_i = B^n/\Gamma_i$, and $\partial N \cap V_i = (S^{n-1} \setminus (D_1\cup D_2 \cup \cdots \cup D_n))/\Gamma_i$ where $D_k$ are disks in $S^{n-1}$, in particular, since $n = 5$, we know that $\pi_1(S^{n-1} \setminus (D_1\cup D_2 \cup \cdots \cup D_n)) = 0$, so $\pi_1^{orb}(N\cap V_i) = \Gamma_i = \pi_1^{orb}(V_i)$. Similarly, $E_{ij} = I \times B^{n-1}/\Gamma_{ij}$ and $\partial N \cap E_{ij} = I\times S^{n-2}/\Gamma_{ij}$ both have the same orbifold fundamental group. Moreover, $V_i\cap E_{ij} = B^{n-1}/\Gamma_{ij}$ and $N\cap V_i\cap E_{ij} = S^{n-2}/\Gamma_{ij}$ also have identical orbifold fundamental groups.

Now consider two van Kampen constructions:
\[
N = \left(\bigcup\limits_i V_i\right) \cup \left(\bigcup\limits_{ij} E_{ij}\right)
\]
and
\[
\partial N = \left(\bigcup\limits_i (V_i\cap \partial N) \right) \cup \left(\bigcup\limits_{ij} (E_{ij}\cap \partial N)\right).
\]

Since each piece and each intersection in the two constructions has the same $\pi_1^{orb}$, and since each of these isomorphisms is natural, van Kampen implies that $i_*:\pi_1^{orb}(\partial N)\to \pi_1^{orb}(N)$ is an isomorphism. So, by Lemma \ref{Lem:Cover}, $\pi_1^{orb}(\partial N^c)\to\pi_1^{orb}(N^c)$ is also an isomorphism.

If $X^c$ contains strata of dimension $\geq 2$, we may need to introduce some \textit{pseudo-strata}. By a pseudo-stratum we understand a component of a stratum that has lower dimension than the rest of the stratum.

The goal of pseudo-strata is to create a nice cellular decomposition of $X^c$ in a way that for each cell of dimension $\geq 2$, its boundary is connected.

The work above demonstrates that the 1-skeleton of $X^c$ satisfies the lemma. We then proceed to glue in the 2-cells by taking a $\delta_2>0$ neighborhood of each 2-cell such that the intersection of any two such neighborhoods lies in the already constructed neighborhood of the 1-skeleton. By van Kampen, when we conduct this gluing, $i_*:\pi_1^{orb}(\partial N^c)\to \pi_1^{orb}(N^c)$ remains an isomorphism.

Repeat the construction for all the cells of dimension $\geq 2$.

\end{proof}

In the above proof of the 1-dimensional case we passed to a graph to avoid issues with disconnected intersection of components when applying van Kampen. The pseudo-strata were used for the same reason.

\begin{cor}\label{Cor:NonMfld}
Let $\calO$ be a compact orientable orbifold. Let $X\subset\calO$ be the set of non-manifold points in $\calO$. Then, there exists $X\subset N\subset\calO$ with $N$ open such that the map $i_*:\pi_1^{orb}(\calO\setminus N) \to \pi_1^{orb}(\calO)$ is an isomorphism.
\end{cor}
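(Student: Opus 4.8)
The plan is to assemble the single neighborhood $N$ out of the component-wise neighborhoods produced by Lemma \ref{Lem:NonMfld}, and then to glue the complement back together via van Kampen, letting the isomorphisms $\pi_1^{orb}(\partial N^c)\to\pi_1^{orb}(N^c)$ do all the work. First I would write $X = X^1\sqcup\cdots\sqcup X^m$ as a disjoint union of its connected components. There are finitely many: by compactness a compact orbifold has finitely many strata, and in the cases of interest the set of non-manifold points is either a finite set of points (dimension $4$) or a compact graph (dimension $5$), as recorded in Section 2. For each $X^c$ Lemma \ref{Lem:NonMfld} supplies a neighborhood $N^c$ with $i_*\colon\pi_1^{orb}(\partial N^c)\to\pi_1^{orb}(N^c)$ an isomorphism; after shrinking I may take the $N^c$ pairwise disjoint, and I set $N=\bigcup_c N^c$, an open set containing $X$. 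I would also observe that $\calO\setminus N$ is path connected: since $X$ has codimension $\ge 4$, a generic path in $\calO$ between two regular points meets neither $X$ nor a sufficiently small neighborhood of it, so any two points of the dense interior $\calO\setminus\bar N$ are joined inside $\calO\setminus N$.

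Next I would attach the pieces one component at a time so that van Kampen applies in its amalgamated-product form. Set $\calO_0=\calO\setminus N$ and $\calO_k=\calO_0\cup\bar{N^1}\cup\cdots\cup\bar{N^k}$, so that $\calO_m=\calO$ and each $\calO_k$ is connected by the previous paragraph. At the $k$-th step I would decompose $\calO_k$ as the union of an open thickening $U$ of $\calO_{k-1}$ with $V=N^k$, where $U$ is obtained by adjoining an open collar of the hypersurface $\partial N^k$ so that $U$ and $V$ are open and $U\cap V$ deformation retracts, in the orbifold sense, onto $\partial N^k$. Because $\partial N^k\hookrightarrow N^k$ induces an isomorphism on $\pi_1^{orb}$, the amalgamated free product $\pi_1^{orb}(U)\ast_{\pi_1^{orb}(\partial N^k)}\pi_1^{orb}(N^k)$ collapses to $\pi_1^{orb}(\calO_{k-1})$, so $i_*\colon\pi_1^{orb}(\calO_{k-1})\to\pi_1^{orb}(\calO_k)$ is an isomorphism. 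Composing these $m$ isomorphisms yields the desired $i_*\colon\pi_1^{orb}(\calO\setminus N)\to\pi_1^{orb}(\calO)$.

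The main obstacle will be the connectivity bookkeeping for van Kampen, namely guaranteeing that the intersection $U\cap V\simeq\partial N^k$ is path connected so that the amalgamated-product statement applies verbatim. If some $\partial N^k$ is disconnected I would instead invoke the fundamental-groupoid version of van Kampen, or reduce to the connected case by choosing the collar together with arcs joining the several boundary components to a fixed basepoint in $\calO_{k-1}$; in either case the essential input is unchanged, since Lemma \ref{Lem:NonMfld} already identifies $\pi_1^{orb}(\partial N^k)$ with $\pi_1^{orb}(N^k)$ compatibly with the inclusions. A secondary point to check is the existence of the collar in the orbifold category, which holds because $\partial N^k$ is a codimension-one suborbifold; this is what lets me realize $\calO$ as the gluing of $\calO\setminus N$ and $\bar N$ along $\partial N$ and legitimizes the open cover used above.
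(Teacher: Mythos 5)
Your proposal is correct and follows essentially the same route as the paper: the paper also obtains $N$ by applying Lemma \ref{Lem:NonMfld} to each connected component of $X$ and then removes (equivalently, glues back) the components one at a time, using van Kampen together with the isomorphism $\pi_1^{orb}(\partial N^c)\to\pi_1^{orb}(N^c)$ to see that $\pi_1^{orb}$ of the remaining portion never changes. Your write-up simply makes explicit the bookkeeping (disjointness of the $N^c$, connectivity of $\calO\setminus N$, collars for the open cover) that the paper leaves implicit.
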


This is an immediate consequence of Lemma \ref{Lem:NonMfld} and van Kampen. If $X$ consists of multiple connected components, we use Lemma \ref{Lem:NonMfld} repeatedly to cut them out one at a time without changing the orbifold fundamental group of the remaining portion.

Take the definitions of $X$ and $N$ as above. Let $\calU = \calO\setminus N$. We define a map from $\pi_1^{orb}(\calO)\to H^{n-1}(|\calO|;\bbZ)$ as the following composition:

\begin{align*}
\pi_1^{orb}(\calO) &\xrightarrow{\cong} \pi_1^{orb}(\calU) \twoheadrightarrow \pi_1(|\calU|) \twoheadrightarrow H_1(|\calU|;\bbZ) \xrightarrow{\cong} H^{n-1}(|\calU|,|\partial\calU|;\bbZ)\\
&\xrightarrow{\cong} H^{n-1}(|\calO|,|N|;\bbZ) \xrightarrow{\cong} H^{n-1}(|\calO|,X;\bbZ) \xrightarrow{\cong} H^{n-1}(|\calO|;\bbZ).
\end{align*}

The maps come from the following results in order:

\begin{enumerate}
\item Corollary \ref{Cor:NonMfld}

\item Classical orbifold result

\item Hurewicz

\item Poincar\'e-Lefschetz Duality

\item Excision

\item Retraction of $|N|$ onto $X$

\item Long exact sequence for relative cohomology
\end{enumerate}

We now utilize this construction to construct and control the map $H^{n-1}(|\calO|)\to H_1(|\calO|)$. In particular, we use the surjective maps $\pi_1^{orb}(\calO)\to H_1(|\calO|)$ and $\pi_1^{orb}(\calO)\to H^{n-1}(|\calO|)$.

Consider $\calU$ as defined in the proof of Corollary \ref{Cor:NonMfld}, that is $\calO$ with a neighborhood of the non-manifold points cut out.

In general, the map $\pi_1(|\calU|)\to\pi_1(|\calO|)$ is surjective, since $N = \calO\setminus\calU$ retracts onto a subset of codimension $\geq 4$. This implies that $H_1(|\calU|)\to H_1(|\calO|)$ is also surjective, so, since $H_1(|\calU|) \cong H^{n-1}(|\calO|)$, we get a surjective map $H^{n-1}(|\calO|;\bbZ)\to H_1(|\calO|;\bbZ)$.

However, if we assume that $\pi_1^{orb}(\calO)\to \pi_1(|\calO|)$ is an isomorphism, then we get $\pi_1^{orb}(\calO) \xrightarrow{\cong} \pi_1^{orb}(\calU) \twoheadrightarrow \pi_1(|\calU|) \twoheadrightarrow \pi_1(|\calO|)$ must be a composition of isomorphisms. Therefore, $H^{n-1}(|\calO|;\bbZ)$ must be isomorphic to $H_1(|\calO|;\bbZ)$.

This completes the proof of Theorem \ref{Mthm:Pi1Orb}.

\section{Proof of Corollary \ref{Mthm:PD45}}

In this section all of our orbifolds are assumed to be compact and orientable. We begin by considering the general structure of cohomology of 4 and 5 dimensional orbifolds.

\subsection{Structure of $H^*(|\calO^4|;\bbZ)$}

A four-dimensional orbifold has the following $\bbR$-valued cohomology groups:
\[
H^k(|\calO|;\bbR) = \begin{cases}
\bbR & k=0,4\\
\bbR^m & k=1,3\\
\bbR^n & k=2.
\end{cases}
\]
Together with classical homology and cohomology results, this implies that
\[
H^k(|\calO|;\bbZ) = \begin{cases}
\bbZ & k=0\\
\bbZ^m & k=1\\
\bbZ^n + \tau^2 & k=2\\
\bbZ^m + \tau^3 & k=3\\
\bbZ + \tau^4 & k=4,
\end{cases}
\]
where $\tau^k$ is the torsion component of $H^k$. The homology groups are:
\[
H_k(|\calO|;\bbZ) = \begin{cases}
\bbZ & k=0,4\\
\bbZ^m + \tau^2 & k=1\\
\bbZ^n + \tau^3 & k=2\\
\bbZ^m + \tau^4 & k=3.
\end{cases}
\]

Let $p\in\calO$ be a regular point, $B_\epsilon(p)\subset\calO^{reg}$ be a small neighborhood of $p$, and $X=|\calO|\setminus B_\epsilon(p)$. Then, $|\calO| = \bar{B_\epsilon(p)}\cup_{S^3} X$. Then, $H^4(X;\bbZ)=0$ since $X$ deformation retracts onto its 3-skeleton. Using Mayer-Vietoris, we get $H^3(S^3)\to H^4(|\calO|)\to H^4(X)$ ($\bbZ\to\bbZ+\tau^4\to 0$) is exact. Therefore, $\tau^4=0$. We conclude that only obstruction to Poincar\'e Duality is the difference between $\tau^2$ and $\tau^3$.

\subsection{Structure of $H^*(|\calO^5|;\bbZ)$}

We repeat the previous arguments to obtain information about the cohomology of five-dimensional orbifolds.

The general structure is
\[
H^k(|\calO|;\bbR) = \begin{cases}
\bbR & k=0,5\\
\bbR^m & k=1,4\\
\bbR^n & k=2,3.
\end{cases}
\]
Together with classical results, we conclude that
\[
H^k(|\calO|;\bbZ) = \begin{cases}
\bbZ & k=0\\
\bbZ^m & k=1\\
\bbZ^n + \tau^2 & k=2\\
\bbZ^n + \tau^3 & k=3\\
\bbZ^m + \tau^4 & k=4\\
\bbZ + \tau^5 & k=5,
\end{cases}
\]
which gives us the following homology groups:
\[
H_k(|\calO|;\bbZ) = \begin{cases}
\bbZ & k=0,5\\
\bbZ^m + \tau^2 & k=1\\
\bbZ^n + \tau^3 & k=2\\
\bbZ^n + \tau^4 & k=3\\
\bbZ^m + \tau^5 & k=4.
\end{cases}
\]

The same argument as we employed in the four-dimensional case is used to show that $\tau^5=0$. So, the only obstruction to Poincar\'e Duality is the difference between $\tau^2$ and $\tau^4$.

\subsection{Consequences}

Note that in dimensions 4 and 5, the only obstruction was in the difference of torsion between $H_1(|\calO|)$ and $H^{n-1}(|\calO|)$. Now with the assumption that $\pi_1^{orb}(\calO)\to \pi_1(|\calO|)$ is an isomorphism, we utilize Theorem \ref{Mthm:Pi1Orb} to conclude that $\bbZ$-valued Poincar\'e Duality holds, which complete the proof of Corollary \ref{Mthm:PD45}.

\begin{rmk}\label{Rmk:HighDim}
Starting in dimension 6, there are multiple pairs of torsion groups that may obstruct Poincar\'e Duality. In particular, in dimension 6, we have $\tau^2$ vs $\tau^5$ and $\tau^3$ vs $\tau^4$.

In fact, in dimension $n$, there are precisely $\left\lfloor\dfrac{n-2}{2}\right\rfloor$ such pairs of torsion groups. Theorem \ref{Mthm:Pi1Orb} allows us to control the difference in one pair.
\end{rmk}

\begin{cor}
Weighted projective spaces $\CP^2[\lambda_0,\lambda_1,\lambda_2]$ and the family of orbifolds $SO(3)\backslash SU(3)/S^1_{p,q}$ introduced by the author in \cite{YSU3} have underlying spaces with integer (co)homology of $\CP^2$.
\end{cor}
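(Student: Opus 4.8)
The plan is to apply Corollary \ref{Mthm:PD45} together with the cohomological structure of four-dimensional orbifolds established above. Both families consist of compact orientable four-dimensional orbifolds without boundary: the weighted projective space $\CP^2[\lambda_0,\lambda_1,\lambda_2]$ is the $S^1$-quotient of $S^5$ and is a complex (hence orientable) orbifold, while the orbifolds $SO(3)\backslash SU(3)/S^1_{p,q}$ are the compact orientable four-dimensional biquotients studied in \cite{YSU3}. Consequently the description of $H^*(|\calO|;\bbZ)$ from the four-dimensional case applies verbatim, and the only possible obstruction to the desired identification with $H^*(\CP^2;\bbZ)$ is nonvanishing torsion, namely $\tau^2$ and $\tau^3$ (recall $\tau^4=0$ was already shown).

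First I would pin down the free ranks $m$ and $n$. For a weighted projective space the rational cohomology coincides with that of $\CP^2$ by the standard computation for the weighted $S^1$-action on $S^5$; for the biquotient family the analogous computation is carried out in \cite{YSU3}. In either case $H^*(|\calO|;\bbR)=H^*(\CP^2;\bbR)$, so in the notation above $m=0$ and $n=1$, which yields
\[
H^k(|\calO|;\bbZ)=\begin{cases} \bbZ & k=0 \\ 0 & k=1 \\ \bbZ+\tau^2 & k=2 \\ \tau^3 & k=3 \\ \bbZ & k=4, \end{cases}
\]
and dually $H_1(|\calO|;\bbZ)=\tau^2$.

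Next I would verify the hypothesis of Corollary \ref{Mthm:PD45}. Each of these orbifolds is presented as the quotient of a simply connected manifold ($S^5$, respectively $SU(3)$) by a connected Lie group acting with finite stabilizers; the standard exact sequence $\pi_1(M)\to\pi_1^{orb}(M/G)\to\pi_0(G)$ then forces $\pi_1^{orb}(\calO)=0$. Via the canonical surjection $\phi$ this makes $\pi_1(|\calO|)=0$ as well, so $\phi\colon 0\to 0$ is trivially an isomorphism. In particular $H_1(|\calO|;\bbZ)=0$, whence $\tau^2=0$.

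Finally, Corollary \ref{Mthm:PD45} applies and delivers integer-valued Poincar\'e Duality. In dimension $4$ this equates $H^2\cong H_2$, hence $\tau^2=\tau^3$; combined with $\tau^2=0$ we obtain $\tau^3=0$, and together with $\tau^4=0$ all torsion vanishes. Therefore $H^*(|\calO|;\bbZ)=H^*(\CP^2;\bbZ)$ and $H_*(|\calO|;\bbZ)=H_*(\CP^2;\bbZ)$. I expect the main obstacle to be purely on the input side rather than in the argument itself: one must confirm that each family is simply connected as an orbifold and that its rational Betti numbers agree with those of $\CP^2$. For the weighted projective spaces both facts are classical, while for the $SU(3)$ family they are established in \cite{YSU3}; once these inputs are in hand, the conclusion is immediate from Corollary \ref{Mthm:PD45}.
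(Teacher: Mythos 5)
Your argument is correct, and its backbone coincides with the paper's: both establish $\pi_1^{orb}(\calO)=0$ and then let Corollary \ref{Mthm:PD45} (equivalently, Theorem \ref{Mthm:Pi1Orb}) kill the torsion $\tau^2,\tau^3$, with $\tau^4=0$ already known from the four-dimensional structure results. Your exact sequence $\pi_1(M)\to\pi_1^{orb}(M/G)\to\pi_0(G)$ for the almost free action of a connected group ($S^1$ on $S^5$, respectively $SO(3)\times S^1_{p,q}$ on $SU(3)$) is just the paper's phrase ``base spaces of orbi-fiber bundles with a simply connected total space and a connected fiber'' rewritten for quotients, so that step matches. Where you genuinely diverge is in pinning down the middle Betti number $n=1$: you import the rational cohomology $H^*(|\calO|;\bbR)=H^*(\CP^2;\bbR)$ from outside --- classical for weighted projective spaces, but for the biquotient family this is an assumption about what \cite{YSU3} actually contains, and it is the one unverified input in your write-up. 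The paper instead computes the Euler characteristic: an adaptation of Kobayashi's fixed-point argument (\cite{KFix}, \cite{KTrans}) to orbifolds gives $\chi=3$ for both families (for weighted projective spaces, iterated Mayer--Vietoris also works); since simple connectivity already forces $m=0$, the relation $\chi=2-2m+n$ then yields $n=1$. The fixed-point route is more self-contained, needing only a count of fixed points of the isometric torus action rather than a full rational cohomology computation, which is presumably why the paper does not lean on \cite{YSU3} for Betti numbers. Both routes are sound; if you keep yours, you should either confirm that the rational cohomology of $SO(3)\backslash SU(3)/S^1_{p,q}$ is indeed computed in the cited reference, or substitute the Euler characteristic argument for that citation.
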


Both families of orbifolds are simply connected, as they are base spaces of orbi-fiber bundles with a simply connected total space and a connected fiber. Furthermore, an adaptation of an argument due to Kobayashi (see \cite{KFix} and \cite{KTrans}) to the orbifold case shows that both families have Euler characteristic 3. For weighted projective spaces this result can also be obtained from iterated application of Mayer-Vietoris.

\bibliographystyle{amsalpha}
\bibliography{References}

\providecommand{\bysame}{\leavevmode\hbox to3em{\hrulefill}\thinspace}
\providecommand{\MR}{\relax\ifhmode\unskip\space\fi MR }
\providecommand{\MRhref}[2]{%
  \href{http://www.ams.org/mathscinet-getitem?mr=#1}{#2}
}
\providecommand{\href}[2]{#2}
\begin{thebibliography}{GWZ08}

\bibitem[Dea11]{DP2}
O.~Dearricott, \emph{A 7-manifold with positive curvature}, Duke Math J.
  \textbf{158} (2011), no.~2, 307--346.

\bibitem[GVZ11]{GVZPosC}
K.~Grove, L.~Verdiani, and W.~Ziller, \emph{An exotic ${T_1 S^4}$ with positive
  curvature}, Geom. Funct. Anal. \textbf{21} (2011), no.~3, 499--524.

\bibitem[GWZ08]{GWZCoho}
K.~Grove, B.~Wilking, and W.~Ziller, \emph{Positively curved cohomogeneity one
  manifolds and 3-{S}asakian geometry}, J. Diff. Geo. \textbf{78} (2008),
  33--111.

\bibitem[Kob58]{KFix}
S.~Kobayashi, \emph{Fixed points of isometries}, Nagoya Math. J. \textbf{13}
  (1958), 63--68.

\bibitem[Kob72]{KTrans}
\bysame, \emph{Transformation groups in differential geometry}, Springer, 1972.

\bibitem[Sat56]{SVM}
I.~Satake, \emph{On a generalization of the notion of manifold}, Proc. Nat.
  Acad. Sci. U.S.A. \textbf{42} (1956), 359--363.

\bibitem[Sco83]{SG3M}
P.~Scott, \emph{The geometries of 3-manifolds}, Bull. London Math. Soc.
  \textbf{15} (1983), 401--487.

\bibitem[Thu80]{TGT3M}
W.~Thurston, \emph{The geometry and topology of three-manifolds}, Princeton
  lecture notes (http://library.msri.org/books/gt3m/), 1980.

\bibitem[Yer14]{YSU3}
D.~Yeroshkin, \emph{Orbifold biquotients of {$SU(3)$}}, Preprint
  arXiv:1401.7565 [math.DG] (2014).

\end{thebibliography}
\end{document}